\newtheorem{theorem}{Theorem}[section]
\newtheorem{lemma}[theorem]{Lemma}
\newtheorem{proposition}[theorem]{Proposition}
\theoremstyle{definition}
\newtheorem{definition}{definition}[section]
\theoremstyle{remark}
\newtheorem{remark}[definition]{Remark}
\numberwithin{equation}{section}
\newcommand{\abs}[1]{\lvert#1\rvert}
\newcommand{\norm}[1]{\left\lVert#1\right\rVert}
\newcommand{\R}{\mathbb{R}}
\newcommand{\N}{\mathbb{N}}
\newcommand{\Z}{\mathbb{Z}}
\title{Nonunique tangent maps at isolated singularities of minimizing $p$-harmonic maps}
\author{Jonas Hirsch}
\begin{document}
\begin{abstract}
The analysis of ``tangent maps'' at singular points of energy minimizing maps plays an important role in our understanding of the fine structure of the singular set.
This note presents the first example of a minimizing (not just stationary) $p$-harmonic map with nonunique tangent maps at an isolated singularity. We construct a $n$-dimensional manifold $N$ such that for every admissible tuple $p< m\le n+2$, there exists a map from $B_1^m$ into $N$ that minimizes the $p$-energy, has an isolated singularity at the origin and admits a continuum of distinct tangent maps. The construction builds upon and extends B.~ White's example for $p=2$ in the stationary case \cite{White}.
\end{abstract}
\maketitle
\vspace{-1.5em}
\begin{center}
\textit{A tribute to Brian White}
\end{center}

\section*{Introduction}
Let $\Omega \subset \R^m$ be a bounded domain and $N$ a compact smooth Riemannian manifold of dimension $n\ge 2$ isometrically embedded in some Euclidean space $\R^l$. The $p$-energy of a map $u\in W^{1,p}(\Omega,N):=\{ v \in W^{1,p}(\Omega, \R^l) \colon v(x) \in N \text{ a.e. }\}$ is 
\[ E_p(u):= \int_{\Omega} \abs{Du}^p. \]
The map $u$ is called minimizing in $\Omega$ iff for all $v \in W^{1,p}(\Omega,N)$ with $u-v \in W^{1,p}_0(\Omega, \R^l)$ one has $E_p(u) \le E_p(v)$.\\

The direct method of the calculus of variations provides, for any given $v_0 \in W^{1,p}(\Omega, \R^l)$ the existence of at least one energy minimizing map $u$ with the same boundary data.  

A map $u$ locally minimizing the $p$-energy is of class $C^{1,\alpha}$ outside of a set of Haussdorff dimension at most $m-[p]-1$. This was shown for $p=2$ by Schoen and Uhlenbeck, in \cite{SchoenUhlenbeck}, it had been extended to general $p$ by F.~Fuchs \cite{Fuchs} , R.~Hardt, F.~Lin \cite{HardtLin}, and S.~Luckhaus \cite{Luckhaus} extended their result to general $p$.

Appealing to the monotonicity formula and the sub-sequential compactness of harmonic maps, they showed that for any $y \in \Omega$ and every sequence $r_i \searrow 0$, a subsequence of the maps
\[ x \mapsto u(y+ r_ix) \]
converges locally in energy to a map $u_\infty\colon \R^m \to N$. Such a map $u_\infty$ is zero homogeneous, itself locally minimizing the $p$-energy and called a \emph{tangent map} of $u$ at $y$.

Hence there is a special interest in understanding this particular class of energy minimizer, more precisely the set of possible tangent maps at a given point $y$. 
For instance one could hope that a tangent map $u_\infty$ provides a good picture of $u$ near $y$. This would not be true if there were more than one tangent map at $y$ meaning there are two different subsequences that converge to different tangent maps. 

Whether such pathological behavior is possible remains unclear to the best of the author's knowledge, even though Brian White announced the existence of such an example in \cite{White}.

The uniqueness of tangent map has been shown
\begin{enumerate}
	\item if there is one constant tangent map at $y$ because then $y$ is actually a regular point
	\item if the target manifold is analytic and singularity $y$ is isolated, proven in the groundbreaking work of L.~Simon, \cite{Simon1983}
	\item if $m=3$ and $N$ is two-dimensional in which case the Gulliver and White could prove a rate of convergence, \cite{GulliverWhite1989}
	\item and more recently with the help an epiperimetric inequality once again for analytic targets, \cite{CaniatoParise2025}.
\end{enumerate}
Results (2) through (4) have been proven in the context of classical $2$-harmonic maps. However, only (1) holds for general $p$. However, the author expects that L.~Simon's result probably extends to general $p$.\medskip

The aim of this note is to give an example for the pathological behaviour:

\begin{theorem}\label{thm:white*}
For every $p>1$, there is a constant $m(p)$ such that for any $m>m(p)$ there exists a smooth manifold $N$ of dimension $n\ge m+2$ and a $p$-harmonic map $U \colon B^m_1 \to N$ with an isolated singularity at $0$ and a continuum of distinct tangent maps. Furthermore $U$ is the unique minimising map with these boundary data.
\end{theorem}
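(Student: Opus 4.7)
My plan is to follow B.~White's stationary $p=2$ construction from \cite{White} and generalise it in two directions: to the nonlinear $p$-energy and, crucially, from stationary points to genuine minimisers. The argument splits naturally into three parts: construction of a target $N$ together with a continuous family of $0$-homogeneous $p$-minimising ``model'' maps, assembly of a spiralling competitor $U$ whose tangent maps exhaust this family, and the minimality argument.

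\textbf{The target and the spiral.} I would choose $N^n\subset\R^l$ with $n\ge m+2$ and an isometric $S^1$-action $\rho_\theta$ such that there exists a nonconstant $0$-homogeneous $p$-minimiser $u_0\colon B_1^m\to N$ with isolated singularity at the origin. The dimensional requirement $m>m(p)$ with $m(p)\sim[p]+1$ is forced both by the partial regularity bound $\dim\sing\le m-[p]-1$ (so that isolated singularities are allowed) and by the need to have a nonconstant $0$-homogeneous $p$-minimiser into $N$. Setting $u_\theta:=\rho_\theta\circ u_0$ produces a circle's worth of $0$-homogeneous $p$-minimisers of equal energy with pairwise distinct boundary data. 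For a smooth slowly winding function $\theta\colon(0,1]\to\R$ with $\theta(r)\to\infty$ as $r\searrow 0$ and $\int_0^1 r^{m-1}|\theta'(r)|^p\,dr<\infty$ (for example $\theta(r)=\log|\log r|$), define
\[ U(x):=\rho_{\theta(|x|)}\bigl(u_0(x/|x|)\bigr). \]
Then $U\in W^{1,p}(B_1^m,N)$ is smooth on $B_1^m\setminus\{0\}$, and along any sequence $r_i\searrow 0$ with $\theta(r_i)\to\theta_\infty\pmod{2\pi}$ the rescalings $U(r_i\,\cdot)$ converge locally in $p$-energy to $u_{\theta_\infty}$. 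Since $\theta(r)$ winds infinitely often as $r\searrow 0$ and $S^1$ is compact, every class mod $2\pi$ is attained, yielding a continuum of distinct tangent maps at $0$.

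\textbf{Minimality -- the main obstacle.} The decisive step is to upgrade $U$ from a $p$-stationary map to a genuine $p$-minimiser. I would proceed in two stages. First, within the equivariant class $\{x\mapsto\rho_{\eta(|x|)}u_0(x/|x|)\}$ the $p$-energy reduces to a one-dimensional variational integral in $\eta$ whose integrand is strictly convex in $\eta'$; combined with the fact that $u_0$ is minimising on each sphere, one obtains minimality (and uniqueness) of $U$ among equivariant competitors from the Euler--Lagrange ODE for $\eta$, using the specific $\theta$ above as the distinguished solution. Second, non-equivariant competitors must be ruled out by building a $p$-calibration from the foliation $\{u_\theta\}_\theta$: if $N$, $\rho_\theta$, and $u_0$ are engineered so that symmetrisation under the $S^1$-action does not increase the $p$-energy -- the $p$-Laplace analogue of the Jensen/foliation input in White's argument -- then every competitor has energy at least that of its symmetrisation, and the problem reduces to the equivariant case already handled. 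Carrying out this symmetrisation/calibration quantitatively for the nonquadratic integrand $|Du|^p$, and arranging the target so that the necessary strict convexity and compatibility with the $S^1$-action hold, is expected to be the bulk of the technical work and is the principal obstacle that distinguishes the present theorem from the known stationary case.
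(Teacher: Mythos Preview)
Your plan has a structural gap that makes it unworkable as stated. If $\rho_\theta$ is an \emph{isometric} $S^1$-action on $N$, then within your equivariant class the $p$-energy integrand depends on $\eta$ only through $\eta'$ (isometry invariance kills all $\eta$-dependence), and by strict convexity in $\eta'$ the unique equivariant minimiser with boundary value $\eta(1)=\theta(1)$ is the constant $\eta\equiv\theta(1)$. In particular $u_{\theta(1)}$ is a competitor with the same boundary data as your $U$ and strictly smaller energy, so $U$ is never minimising --- not even among equivariant maps. You cannot simultaneously have an isometric circle action (so that all $u_\theta$ are energy-equivalent minimisers, giving you the continuum of tangent candidates) and have the spiral forced by the Euler--Lagrange equation. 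White's stationary example already exploits a \emph{non}-isometric structure for exactly this reason.

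The paper's resolution is to abandon the isometric action entirely and work on $N=(T^1\times\R)\times S^n$ with the warped metric $d\theta^2+dy^2+(B_0-V(\theta,y))g_0$, where $V(\theta,y)=-e^{-1/y^2}\sin(\theta+1/y)$. The equivariance used is $O(m)$-equivariance (acting on the $S^n$ factor), not an $S^1$-action on the $\theta$ circle. The radial profile $u(t)=(\theta(t),y(t))$ is not prescribed but is a solution of the resulting second-order ODE; the specific potential is designed so that (i) $V$ vanishes on $T^1\times\{0\}$, making the whole circle of maps $(\theta,0,x/|x|)$ degenerate equal-energy tangent maps, while (ii) the zero set $\{\theta+1/y\in\pi\Z\}$ spirals toward $y=0$ and the Hamiltonian $H(u)=\ell(u)^{\beta-1}((2\beta-1)|u'|^2-B(u))$ satisfies $H'\le 0$ along trajectories, trapping the solution in a strip that forces $\theta(t)\to-\infty$ as $t\to-\infty$. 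Minimality and uniqueness are then obtained not by a calibration/foliation argument but by (a) a symmetrisation in the $SO(m)$ variable to make the $T^1\times\R$-part radial, (b) a Hardy-inequality comparison (requiring $4(m-1)<(m-p)^2$, which is what fixes $m(p)$) to force the spherical part to be the hedgehog, and (c) ODE uniqueness together with the Hamiltonian sign to show every finite-energy critical profile is the unique minimiser for its own boundary data. The mechanism producing the spiral is the \emph{shape of the potential}, not a freely chosen winding function.
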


The argument is inspired by the example of B.~White for a stationary but not necessarily minimizing harmonic map with a continuum of distinct tangent maps. 

It is worth mentioning that the manifold $N$ is kind of universal that for every compact set $\mathcal{P}$ of parameters $(p,m)$ one can choose $N$ independent of the individual elements in $\mathcal{P}$.
 
\section*{Acknowledgment}
I want to thank B.~White for his inspiring works. Furthermore I am grateful to Katarzyna Mazowiecka for bringing the article \cite{Hong2000} to my attention. Finally I would like to thank the DFG for supporting my research.

\section{The ``universal'' manifold}
Following B.~White we consider the ``universal'' $n+2$ dimensional manifold $N$ being the product $\left(T^1\times \R \right)\times S^n$ associated with the metric
\begin{equation}\label{eq.the metric universal manifold}
	g=d\theta^2+dy^2+(B_0-V(\theta,y))\,g_0\,;
\end{equation}
where $(T^1=\R / \Z, d\theta^2)$ is the flat torus, $g_0$ is the round metric on $S^n$, the large constant $B_0$ satisfies at least \[B_0> \sup (|V|+ |\nabla V|)+1\,.\] 

Even though most of our analysis does not depend on the specific choice of the potential $V$, we take it to be the very same as B.~White proposed i.e. 

\begin{equation}\label{eq.potential}
	V(\theta,y)=-e^{-\nicefrac{1}{y^2}}\sin\left(\theta+\nicefrac1y\right)
\end{equation}

This defines a complete metric on $N$ for any $n \in \N$. We will always assume that $n \ge m-1$ but otherwise the result does not depend on the specific choice of $n$. In that sense the manifold is ``universal''.

Each map $U$ taking values in $N$ can be divided into the part $u$ taking values in $T^1\times R$ and $v$ taking values in $S^n$. 

The associated $p$-energy of a map $U=(u,v) \colon B_1^m \to N$ is henceforth
\[E(U)= \int_{B_1}  \left(|Du|^2 + B(u) \,|Dv|^2\right)^{\frac{p}{2}} \,.\]

Independent of the dimension of the spherical part $S^n$ in $N$ one has that the orthogonal group $O(m)$ acts on $B_1\subset \R^m$ and $N$. In the latter case by $g\cdot (\theta,y,z)= (\theta,y,g\,z)$ where we consider $g\in O(m) \subset O(n+1)$ by identifying $\R^m$ with $\R^m\times \{0\} \subset \R^n$. 
We will be particular interested in maps that are $O(m)$-equivariant i.e. $U(g\,x)=g\cdot U(x)$. One realises that every equivariant map is of the form 
\begin{equation}\label{eq.equivariant maps}
	U(x)=\bigl(u(|x|), \pm \frac{x}{|x|}\bigr)
\end{equation}
where $u\colon (0,1) \to T^1 \times \R$ is radial symmetric. \\

The theorem is a corollary of the following two propositions, that will be proven in the next two sections.

\begin{proposition}\label{prop.equivariant minimizers}
	There are constants $\epsilon_1=\epsilon_1(m,p), \,\overline{B}_0=\overline{B}_0(m,p,\epsilon_1,|V|,|\nabla V|)$ such that any energy minimizing map $U=(u, v) \colon B_1^m \to N$ with
	\begin{enumerate}
		\item\label{a.equivariant boundary} equivariant boundary data, i.e. $U(x)=(u_0, x)$ on $\partial B_1$, $u_0 \in T^1\times \R$ constant. 
		\item\label{a.small energy} $E(U)\le \epsilon_1 \, B_0^{\nicefrac{(p+1)}{2}}$ 
		\item\label{a.admissible parameters} $4(m-1)<(m-p)^2$ and $B_0>\overline{B}_0$ 
	\end{enumerate}
	 is equivariant.
\end{proposition}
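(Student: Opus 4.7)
The approach is to combine the $O(m)$-symmetry of the problem with a rigidity statement for the equator map $x/\abs{x}$. Setting $U_g(x):=g^{-1}\cdot U(gx)$ for $g\in O(m)$, the invariance of both $E$ and the boundary data $(u_0,x)$ makes $U_g$ a minimizer whenever $U$ is, so equivariance of $U$ is equivalent to uniqueness of the minimizer within the class fixed by the hypotheses.

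First I would construct the equivariant candidate $U^*(x)=(u^*(\abs{x}),\,x/\abs{x})$ by reducing to a one-dimensional variational problem for $u^*$ on $(0,1)$ whose integrand is strictly convex thanks to $p>1$ and $B=B_0-V>0$; this yields existence and uniqueness of $u^*$, with $E(U^*)=O(B_0^{p/2})$, comfortably inside the window $\epsilon_1 B_0^{(p+1)/2}$. The main task is then to show that any minimizer $U=(u,v)$ satisfies $v(x)=\pm x/\abs{x}$. The crucial input is a Hong--J\"ager--Kaul-type uniqueness: under the admissibility condition $(m-p)^2>4(m-1)$, which is the saturation of the sharp Hardy inequality on $B_1^m$, the equator map $x/\abs{x}\colon B_1^m\to S^{m-1}$ is the unique $p$-energy minimizer with boundary data $\mathrm{id}$. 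To bring $v$ into the scope of that theorem I would use the bound $\int B^{p/2}\abs{Dv}^p\le E(U)\le\epsilon_1 B_0^{(p+1)/2}$ combined with a Luckhaus-type slicing over the spheres $\partial B_r$ to confine $v$ to a tubular neighborhood of $S^{m-1}\subset S^n$, and then apply a Lipschitz nearest-point retraction onto $S^{m-1}$ (non-expansive provided $B_0$ is large compared to $\abs{V}+\abs{\nabla V}$) to produce a competitor with values in $S^{m-1}$ without raising the energy; Hong's uniqueness then pins down $v=\pm x/\abs{x}$. With $v$ in this form, the residual variational problem in $u$ reduces to the radial ODE already solved for $u^*$: since $T^1\times\R$ is flat, averaging $u$ over the $O(m)$-orbits together with strict convexity of the reduced integrand forces $u$ to be radial and to equal $u^*(\abs{x})$.

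I expect the principal obstacle to be the confinement of $v$ to a neighborhood of the equator. Minimality provides only indirect control over the distance of $v$ from $S^{m-1}$ inside the ambient $S^n$, so one must combine the small-energy slicing (Luckhaus) with a carefully chosen retraction whose Lipschitz constant does not eat into the margin in the Hardy-type bound $(m-p)^2>4(m-1)$. The interplay between the size of the perturbation $V$ and the sharpness of this Hardy--Hong inequality is precisely what dictates the dependence of $\overline{B}_0$ on $m$, $p$, $\epsilon_1$, $\abs{V}$, and $\abs{\nabla V}$.
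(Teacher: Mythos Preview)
Your proposal reverses the paper's order of reduction and, in doing so, omits the step that actually makes the Hong--Hardy comparison close. The paper first (Step~1) replaces the $T^1\times\R$-component $\tilde u$ of a minimizer by a \emph{radial} $u(\abs{x})$ without raising the energy---not by averaging $\tilde u$ over orbits (the integrand is \emph{not} convex in $u$, since $B(u)=B_0-V(u)$ and $V$ is oscillatory), but by averaging the energy over rotations of $\tilde u$ alone and then selecting one good direction $y_0\in\partial B_1$. Only after $u$ is radial does the paper (Step~2) integrate the one-dimensional Euler--Lagrange equation together with the monotonicity formula to obtain the pointwise bound
\[
\abs{r\,u'(r)}^{p-1}\le C\,\frac{\norm{\nabla V}_\infty}{B_0}\,E(U),
\qquad\text{hence}\qquad \abs{r u'(r)}^2\le C\,\epsilon_1 B_0 .
\]
This bound is the missing ingredient in your sketch: it is precisely what forces the radial weight $a(r)=\bigl(r^2\abs{u'}^2+(m-1)B(u)\bigr)^{(p-2)/2}B(u)$ appearing in the Hong argument to satisfy $\max a/\min a\to 1$ as $\epsilon_1,\,\sup\abs V/B_0\to 0$, so that $\delta=\tfrac{4(m-1)}{(m-p)^2}\cdot\tfrac{\max a}{\min a}<1$ and Hardy's inequality pins down $v=x/\abs{x}$. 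Without first radializing $u$ and deriving this ODE bound, the coupling between $u$ and $v$ in $(\abs{Du}^2+B(u)\abs{Dv}^2)^{p/2}$ destroys the margin $4(m-1)<(m-p)^2$.

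Your confinement-plus-retraction route for $v$ has two further defects. First, the hypothesis only gives $\int\abs{Dv}^p\lesssim\epsilon_1 B_0^{1/2}$, which is \emph{large} (the hedgehog already has $\int\abs{Dv_0}^p\asymp 1$), so Luckhaus slicing cannot confine $v$ near $S^{m-1}$. Second, the nearest-point retraction of $S^n$ onto a totally geodesic $S^{m-1}$ is not non-expansive: at geodesic distance $d$ from $S^{m-1}$ it stretches tangential directions by $1/\cos d>1$, so it can raise the energy. Both issues are sidestepped in the paper because the Hardy--Hong comparison in Step~3 applies directly to competitors $v\colon B_1\to S^n$ via the pointwise identity $-2v_0\cdot(v-v_0)=\abs{v-v_0}^2$ in $\R^{n+1}$; no confinement to $S^{m-1}$ is needed.
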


A consequence of the second proposition will be that the equivariant harmonic maps we are interested in will satisfy the assumption \eqref{a.small energy} on the energy.

\begin{remark}
	The condition $4(m-1)<(m-p)^2$ is necessary in the sense that if $4(m-1)>(m-p)^2$ the hedge hog $v_0(x)=\frac{x}{|x|}$ is not even a stable harmonic map from $B_1$ into $S^m$, compare \cite{Hong2000}. 
\end{remark}

\begin{proposition}\label{prop.equivariant stationary}
There is an equivariant $p$-harmonic map $U(x)=(u(|x|),\frac{x}{|x|})$ that is the unique minimizer with respect to its own boundary data and additionally satisfies 
	\begin{enumerate}
		\item\label{c.small energy} $E(U) \le C B_0^{\nicefrac{p}{2}}$,
		\item\label{c.tangent map}
		 $0$ is an isolated singularity and its tangent maps there are the continuum 
		 \[\left\{ (\theta,0,\frac{x}{|x|}) \colon \theta \in T^1\right\}\,.\]
	\end{enumerate}
\end{proposition}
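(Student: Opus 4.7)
The proof proceeds by reducing to a one-dimensional variational problem, analyzing its asymptotic dynamics as $r\to 0$ to extract the continuum of tangent maps, and finally invoking Proposition~\ref{prop.equivariant minimizers} for uniqueness. To begin, fix equivariant boundary data $U(x)=(\theta_0, y_0, x)$ on $\partial B_1$ with $y_0>0$ small. Restricted to equivariant competitors $U(x)=(u(|x|),x/|x|)$ with $u=(\theta,y)\colon (0,1)\to T^1\times\R$, the $p$-energy reduces to
\begin{equation*}
E_{1D}(u)=c_m\int_0^1\bigl(|u'(r)|^2+(m-1)r^{-2}B(u(r))\bigr)^{p/2}\,r^{m-1}\,dr.
\end{equation*}
Coercivity (using $B\ge 1$) and weak lower semicontinuity give a minimizer $u$ by the direct method, smooth on $(0,1)$ by interior regularity for the associated Euler--Lagrange ODE. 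Testing with the constant competitor $u\equiv(\theta_0,y_0)$ — which matches the boundary data since $|x|=1$ on $\partial B_1$ — yields $E_{1D}(u)\le CB_0^{p/2}\int_0^1 r^{m-1-p}\,dr\le CB_0^{p/2}$, the integral being finite because $m>p$ (forced by the admissibility $(m-p)^2>4(m-1)$ for $m$ large). This verifies item~(\ref{c.small energy}).

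The core of the argument is the asymptotic analysis of $u(r)$ as $r\to 0$. Passing to logarithmic time $\tau=-\log r$, the radial Euler--Lagrange equation becomes an asymptotically autonomous planar system in $(\theta,y)$, whose only driving term comes from $V(\theta,y)=-e^{-1/y^2}\sin(\theta+1/y)$. Because $V$ and its derivatives vanish to infinite order at $y=0$, the system near $\{y=0\}$ is a small perturbation of the $V\equiv 0$ case. Adapting White's phase-plane argument (with modifications for the degenerate $p$-Laplacian principal part), I would show: (i) the trajectory is attracted to $\{y=0\}$, with $y(r)\to 0$ monotonically; and (ii) the natural phase variable $\phi=\theta+1/y$ stays bounded and converges to some $\phi_\infty\in T^1$. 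Hence $\theta(r)=\phi(r)-1/y(r)$ winds densely around $T^1$ as $r\to 0$.

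The tangent maps can now be read off. By the monotonicity formula and $W^{1,p}_{\mathrm{loc}}$-compactness for $p$-minimizers, every rescaling $U(r_i\,\cdot\,)$ admits a subsequential blow-up limit, which is a tangent map. The spherical factor always limits to $x/|x|$ by equivariance; the $u$-factor limits to the constant $(\theta_*,0)$ with $\theta_*=\lim\theta(r_i)\in T^1$. Density of $\{\theta(r)\colon r\in(0,1)\}$ in $T^1$ then realises every element of the prescribed continuum, giving~(\ref{c.tangent map}); the origin is the only singularity because $u$ is smooth on $(0,1)$. For uniqueness, if $\tilde U$ is another minimizer with the same boundary data then $E(\tilde U)\le CB_0^{p/2}\le \epsilon_1 B_0^{(p+1)/2}$ once $B_0\ge (C/\epsilon_1)^2$, so Proposition~\ref{prop.equivariant minimizers} forces $\tilde U$ to be equivariant, reducing it to a 1D minimizer $\tilde u$; uniqueness in the reduced problem then follows (for generic $u_0$) from strict positivity of the second variation of $E_{1D}$ at $u$ combined with backward uniqueness of the Euler--Lagrange ODE emanating from $r=1$.

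The main obstacle is the dynamical analysis. One has to identify an asymptotically autonomous form of the radial $p$-harmonic ODE, then show that minimizing trajectories satisfy $y\to 0$ and $\phi\to\phi_\infty$. For $p=2$ White uses an explicit Hamiltonian structure; for general $p$ the principal part is degenerate in $u'$, so one must work harder both to control regularity in the radial ODE and to linearise at the degenerate state $y=0$. A secondary subtlety is uniqueness within the equivariant class, since the reduced Lagrangian is not convex in $u$ and one cannot simply invoke convexity.
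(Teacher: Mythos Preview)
Your sketch has a genuine gap at the crucial dynamical step, and a second one in the construction of the trajectory. You propose to obtain $u$ by minimizing $E_{1D}$ with prescribed boundary data $(\theta_0,y_0)$ at $r=1$ and then to argue perturbatively near $\{y=0\}$ that $\phi=\theta+1/y$ stays bounded. But nothing in minimality alone forces this; the mechanism that traps the trajectory away from $\{V=0\}=\{\sin(\theta+1/y)=0\}\cup(T^1\times\{0\})$ is missing from your argument. The paper's device is a ``Hamiltonian'' valid for every $p>1$,
\[
H(u)=\ell(u)^{\beta-1}\bigl((2\beta-1)\,|u'|^2-B(u)\bigr),\qquad \beta=\tfrac{p}{2},\ \ \ell(u)=|u'|^2+(m-1)B(u),
\]
which is nonincreasing in $t=\log r$ along solutions and satisfies $H\ge -B_0^{\beta}$ on $\{V=0\}$. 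A solution with $H<-B_0^{\beta}$ everywhere therefore never meets $\{V=0\}$, so $\theta+1/y$ is confined to a single $\pi$-interval; since $y\to 0$, the lifted angle $\theta$ must tend to $-\infty$ and hence passes through every class in $T^1$ infinitely often. Your remark that ``for $p=2$ White uses an explicit Hamiltonian structure; for general $p$ \ldots\ one must work harder'' is precisely where the proof lives, and the paper supplies exactly this structure.

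The second gap is that your construction cannot produce a trajectory with $H<-B_0^{\beta}$ on all of $(-\infty,0]$. Because $H$ is \emph{decreasing} in $t$, imposing data at $t=0$ and going backward lets $H$ increase and possibly cross $-B_0^{\beta}$, after which the no-crossing argument fails. The paper instead launches solutions $u_n$ \emph{forward} from initial points $(\tfrac{\pi}{2},\,\tfrac{1}{2n\pi})$ with $u_n'(0)=0$ (where $V<0$, so $H(u_n(0))<-B_0^{\beta}$), records the first time $t_n$ the orbit reaches $T^1\times\{1\}$, shifts by $t_n\to\infty$, and extracts a limit $v$. This limit satisfies $H(v(t))<-B_0^{\beta}$ for all $t\in\R$, which is what drives the winding as $t\to-\infty$. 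A direct minimization with generic boundary data does not guarantee this.

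Finally, your uniqueness argument within the equivariant class (``strict positivity of the second variation \ldots\ for generic $u_0$'') is not a proof; the Lagrangian is nonconvex in $u$ and no such positivity is established. The paper's argument is short and robust: since solutions of the ODE exist for all time, $u$ is also minimizing on $(-\infty,\delta)$ for every $\delta>0$; any competing minimizer $\tilde u$ on $(-\infty,0)$, extended by $u$ on $[0,\delta]$, is then a minimizer on $(-\infty,\delta)$, hence solves the Euler--Lagrange ODE across $t=0$, hence coincides with $u$ by ODE uniqueness.
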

 
\section{Proof of Proposition \ref{prop.equivariant minimizers}}\label{sec.equivariant minimizers}
We will divide the proof into the following four substeps: 
\begin{itemize}
	\item[\emph{Step 1:}] Let $\tilde{U}=(\tilde{u},v)\colon B_1 \to N$ be given such that $\tilde{u}=u_0$ constant on $\partial B_1$ for some $u_0 \in T^1 \times \R$ then there exists $u(x)=u(|x|)$ radial symmetric with $u(1)=u_0$ such that 
	\[ E((u,v)) \le E(\tilde{U})\,. \]
	Equality implies that $\tilde{u}$ is radial. 
	\item[\emph{Step 2:}] Let $U=(u(|x|), v) $ be a stationary $p$-harmonic map then 
	\begin{equation}\label{eq.bound of Step 2}
		B_0|r\,u'(r)|^{p-1}\le C \norm{\nabla V}_\infty E(U)\,.
	\end{equation}
	\item[\emph{Step 3:}] Let $U=(u(|x|),v)$ be energy minimizing satisfying $v(x)=x$ on $\partial B_1$ and the energy bound \eqref{a.small energy} then $v(x)=\frac{x}{|x|}$ i.e. $U$ is equivariant. 
\end{itemize}

\subsubsection*{to Step 1:}\label{subsubsec.toStep1} Let $\tilde{u}_g(x)=\tilde{u}(g\,x)$ for any $g \in SO(m)$. Since $\tilde{u}=u_0$ is constant on $\partial B_1$ the map $\tilde{U}_g=(\tilde{u}_g, v)$ is a competitor to $\tilde{U}$ with $E(\tilde{U}_g)=E(\tilde{U})$ thus we may integrate over the Haar measure $\mu$ on $SO(m)$ and obtain 
\begin{align*}
	E(\tilde{U}) = \fint_{SO(m)} E(\tilde{U}_g) \, d\mu(g) = \fint_{SO(n)} \int_{B_1}  \left(|D\tilde{u}(gx)|^2 + B(\tilde{u}(gx)) \,|Dv(x)|^2\right)^{\nicefrac{p}{2}} \, dxd\mu(g)\,.
\end{align*}
Using that
\[\fint_{SO(m)} f(gx)\, d\mu(g) = \fint_{\partial B_{|x|}} f(y)\, ds(y)\]
we can rewrite the energy 
\begin{align}\nonumber
	\frac{E(\tilde{U})}{|\partial B_1|}&= \fint_{\partial B_1}\fint_{\partial B_1} \int_0^1 \left( |D\tilde{u}(ry)|^2 + B(\tilde{u}(ry)\,|Dv(rz)|^2\right)^{\nicefrac p2}\, r^{n-1}drds(y)ds(z)\\
	&\ge  \fint_{\partial B_1}\fint_{\partial B_1} \int_0^1 \left( |\frac{\partial}{\partial r}\tilde{u}(ry)|^2 + B(\tilde{u}(ry)\,|Dv(rz)|^2\right)^{\nicefrac p2}\, r^{n-1}drds(y)ds(z)\label{eq.only radial}
\end{align}
Thus there must be $y_0\in \partial B_1$ such that 
\begin{align*}
	E(\tilde{U})&\ge \int_{\partial B_1} \int_0^1 \left( |D\tilde{u}(ry_0)|^2 + B(\tilde{u}(ry_0)\,|Dv(rz)|^2\right)^{\nicefrac p2}\, r^{n-1}drds(y)ds(z) \\
	&\ge \int_{\partial B_1} \int_0^1 \left( |D\tilde{u}(ry_0)|^2 + B(\tilde{u}(ry_0)\,|Dv(rz)|^2\right)^{\nicefrac p2}\, r^{n-1}drds(y)ds(z)\\
	&=E(U)\,
\end{align*}
where $U=(\tilde{u}(|x|y_0), v(x))$.
Note that in case of equality \eqref{eq.only radial} implies that $|D\tilde{u}(x)|=|\frac{\partial}{\partial_r} \tilde{u}(x)|$ for a.e. $x$ or equivalently that the tangential energy $|D_\tau \tilde{u}(x)|=0$ a.e. This implies that $u$ is radial symmetric, i.e. $\tilde{u}(x)=\tilde{u}(|x|)$.

\subsubsection*{to Step 2:}\label{subsubsec.toStep2}
The monotonicity formula\footnote{It can obtained by testing the inner variation with the Lipschitz continuous vectorfield $X(x)= \frac{x-y}{\max\{|x-y|,r\}^{m-p}}- \frac{x-y}{R^{m-p}}$, that is vanishing on $\partial B_R(y)$ and therefore admissible.} for stationary $p$-harmonic maps states that for any ball $B_R(y)\subset \Omega, 0<r<R$ and the ``$y$-radial derivative'' $\frac{\partial}{\partial r_y}U(x) = DU(x)\frac{(x-y)}{|x-y|}$  
\begin{equation}\label{eq.monotonicityformula}
	R^{p-m} \int_{B_R(y)} |DU|_g^p - r^{p-m} \int_{B_r(y)} |DU|_g^p = p \int_{B_R(y)\setminus B_r(y)} \frac{|DU|_g^{p-2} |\frac{\partial}{\partial r_y}U|_g^2}{|x-y|^{m-p}}\,.
\end{equation}

In particular for $y=0$ we find by H\"older's inequality and a subsequent application of the monotonicity formula 
\begin{align*}
	\int_{B_r} |DU|_g^{p-2}|u'| &\le \left( \int_{B_r}|DU|_g^{p-2} |u'|^2\right)^{\nicefrac12} \left( \int_{B_r} |DU|_g^p \right)^{\nicefrac{(p-2)}{2p}} |B_r|^{\frac1p}\\
	&\lesssim  r^{(m-p)(1-\nicefrac{1}{p})} E(U)^{1-\nicefrac1p} r^{\nicefrac mp} \lesssim E(U)^{1-\nicefrac1p} r^{m+1-p}\,.
\end{align*}
Therefore we can find a sequence $r_k\downarrow 0 $ s.t. for all $k$ \[\int_{\partial B_{r_k}} |DU|_g^{p-2} |u'| \le \frac{8}{r_k} \int_{B_{2r_k}}|DU|_g^{p-2}|u'| \lesssim 8 E(U)^{1-\nicefrac 1p}r_k^{m-p}\,.\]
It converges to $0$ a $k \to \infty$ since $m>p$. \\
For any radially symmetric first component, $u=u(|x|)$, which we consider as a function that depends on only one variable, the energy can be written as $E(U)= \int_0^1 L(r,u,u')\, dr $, where $L(r,u,u')=\int_{\partial B_r} |DU|_g^p \, ds$. As a stationary point of this one-dimensional Lagrangian, it must satisfy 
\begin{equation}\label{eq.lagrangian - EUL}
	\frac{d}{dr} \frac{\partial L}{\partial u'}=\frac{\partial L}{\partial u}\,.
\end{equation}
One readily checks
\begin{align*}
	\frac{\partial L}{\partial u'} &= p\int_{\partial B_r} |DU|_g^{p-2}u'\,ds\\
	\frac{\partial L}{\partial u} &= p \int_{\partial B_r} |DU|^{p-2}_g |Dv|^2\,\frac{\partial B}{\partial u}\, ds\,.
\end{align*}

Hence we may integrate \eqref{eq.lagrangian - EUL} on $r \in (r_k, \rho)$ to obtain
\[ \left(\int_{\partial B_\rho} |DU|_g^{p-2}u'\,ds\right) - \left(\int_{\partial B_{r_k}} |DU|_g^{p-2}u'\,ds\right) = \int_{B_\rho \setminus B_{r_k}} |DU|^{p-2}_g |Dv|^2\,\frac{\partial B}{\partial u}\,.\]
Due to our choice of $r_k$ we can take the limit $r_k\downarrow 0$ to obtain \eqref{eq.bound of Step 2}
\begin{align}\nonumber
	|\partial B_\rho| |u'(\rho)|^{p-1} &\le \sup\frac{|\frac{\partial B}{\partial u}|}{B} \int_{B_\rho} |DU|_g^{p-2} B |Dv|^2 \le \sup\frac{|\frac{\partial B}{\partial u}|}{B} E(U) \rho^{m-p}\\
	&\le C \,(\epsilon_1 B_0)^{\nicefrac{(p-1)}{2}} \rho^{m-p} \label{eq.first gradient bound}
\end{align}

\subsubsection*{to Step 3:}\label{subsubsec.toStep3}
We take inspiration of the argument used in \cite[section 2.1]{Hong2000}. His argument relies on the following classical version of Hardys inequality: 
\[ (m-p)^2 \int \frac{1}{|x|^p} |\tilde{w}|^2\le 4 \int \frac{1}{|x|^{p-2}}\left|\frac{\partial \tilde{w}}{\partial r}\right|^2 \,,\]
for any $\tilde{w} \in H^1_0(\R^m) \cap L^\infty$ with equality if and only if $\tilde{w}\equiv 0$. 

The equivariant map $v_0(x)=\frac{x}{|x|}$ is stationary for all $m\ge p$ i.e.
\begin{equation*}\label{eq.hedge hog is p-stationary}
\int |Dv_0|^{p-2} Dv_0\cdot D	w = \int |Dv_0|^p\, v_0\cdot w \quad \forall w \in C_c^\infty(\R^m)
\end{equation*}
Since $v_0$ is $0$-homogeneous i.e. $\partial_r v_0=0$ and $|Dv_0|^2 = \frac{m-1}{|x|^2}$ we deduce for any measurable $a(r) \in L^\infty(B_1)$ by approximation, that
 \begin{equation}\label{eq.hedge hog is p-stationary-v2}
\int \frac{a(r)}{|x|^{p-2}} Dv_0\cdot D	w = \int \frac{a(r)}{|x|^{p-2}}|Dv_0|^2 \;v_0\cdot w \quad \forall w \in W^{1,p}_0(B_1) 
\end{equation}
In case $w=v-v_0$, where $v\in W^{1,p}(B_1, S^n)$ is a given competitor to $v_0$, we have $ -2 v_0\dot w = |w|^2$. Multiplying the above identity by $-2$ and applying Hardy's inequality gives 
\begin{align*}
	-2\int \frac{a(r)}{|x|^{p-2}} Dv_0\cdot D	w = \int \frac{a(r)}{|x|^{p-2}}|Dv_0|^2 |w|^2 \le \frac{4(m-1)}{(m-p)^2}\, \frac{\max a}{\min a}\int \frac{a(r)}{|x|^{p-2}}|Dw|^2\,.
\end{align*}
We found that, by rearranging the above and abbreviating $\delta = \frac{4(m-1)}{(m-p)^2}\, \frac{\max a}{\min a}$, that
\begin{align}\nonumber
	0 &\le (\delta -1) \int \frac{a(r)}{|x|^{p-2}}|Dw|^2 + \int_{B_1} \frac{a(r)}{|x|^{p-2}}\left(|Dv|^2-|Dv_0|^2
	\right)\\\nonumber
	&\le (\delta -1) \int \frac{a(r)}{|x|^{p-2}}|Dw|^2 + \int_{B_1} \frac{a(r)}{B(u)|x|^{p-2}}\left((|Du|^2 + B(u)|Dv|^2)-(|Du|^2 + B(u)|Dv_0|^2)\,.
	\right)
\end{align}
This inequality suggest the following choice 
\[a(r)=\left( r^2|Du|^2 + (m-1)B(u) \right)^{\nicefrac{(p-2)}{2}} B(u)\,.\]
In step 2 we established \eqref{eq.first gradient bound} i.e. $|ru'(r)|^2\le C \epsilon_1 B_0$. We can estimate by setting $\epsilon_2 = \frac{\sup|V|}{B_0}$ 
\[ B_0^p \left((m-1)(1-\epsilon_2 )\right)^{\nicefrac{(p-2)}{2}}(1-\epsilon_2) \le a(r) \le B_0^p \left(C\epsilon_1 + (m-1)(1+\epsilon_2)\right)^{\nicefrac{(p-2)}{2}}(1+\epsilon_2)\,.\]
Therefore, for sufficiently small $\epsilon_1,\epsilon_2>0$, we deduce that $\theta<1$. Thus, based on our choice of $a(r)$, we conclude that 
\[ \int_{B_1} \left(|Du|^2+ B(u)|Dv_0|^2\right)^{\nicefrac p2} \le \int_{B_1}\left(|Du|^2+ B(u)|Dv_0|^2\right)^{\nicefrac{(p-2)}{2}}\left(|Du|^2+ B(u)|Dv|^2\right)\,.\]
In a first step, H\"older's inequality implies that the function $U=(u,v_0)$ is minimizing. Then, equality in the above argument implies that $|Dw|=0$, so $U=(u,v_0)$.


\section{Proof of Proposition \ref{prop.equivariant stationary}}\label{sec.equivariant stationary}
As before we divide the proof in several steps and two lemmas. The lemmas are in reminiscence to the original article of B.~White. 

\begin{itemize}
	\item[\emph{Step 0:}] Change of coordinates and ``new'' energy with parameters $\beta>\frac12, \alpha>0$
	\begin{equation}\label{eq.new energy}
		E(u)= \int_{-\infty}^R \left(|u'|^2 +(m-1)B(u)\right)^\beta \, e^{\alpha t} dt= \int_\infty^R \ell(u)^\beta \, e^{\alpha t}dt
	\end{equation}
	\item[\emph{Step 1:}] Euler-Lagrange equation, see \eqref{eq.Euler-Lagrange}, and first integral/ ``Hamiltonian'' 
	\begin{equation}\label{eq.hamiltonian}
		H(u)= \ell(u)^{\beta-1}\left( (2\beta -1) |u'|^2 - B(u)\right)
	\end{equation}
	\item[\emph{Step 2:}] the following equivalence holds for critical points of \eqref{eq.new energy}: 
	$u$ has finite energy $\Leftrightarrow$ $H(u)<0$. 
	\item[\emph{Step 3:}] every finite energy critical point is minimizing 
	\item[\emph{Step 4:}] every minimizer of \eqref{eq.new energy} is uniquely minimizing
	\item[\emph{Step 5:}]
	 \begin{lemma}\label{lem.H constant} Let $u$ be a solution to \eqref{eq.Euler-Lagrange} 
	 \begin{enumerate}
	 	\item The following are equivalent: 
	 		\begin{enumerate}
	 		\item\label{eq.H constant: equivalence a} $H$ is constant,
	 		\item\label{eq.H constant: equivalence b} $u$ is constant and $u \in T^{1}\times \{0\} \subset \{ V=0\}\cap \{H=-B_0^\beta\}$
	 		\end{enumerate}
	 	\item If $H(u(0))< - B_0^{\beta}$ then \begin{enumerate} \item\label{eq.unbounded} $\limsup_{t\to \infty} |u(t)|= + \infty$ \item\label{eq.no intersection} $u(t) \notin \{V=0\}$ for all $t>0$.  \end{enumerate}
	 \end{enumerate}
	\end{lemma}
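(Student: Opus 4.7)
\emph{Proof plan.} The heart of the argument is a monotonicity identity for $H$ along solutions of the Euler-Lagrange equation of \eqref{eq.new energy}. Starting from this equation, using the algebraic identity $H = 2\beta\ell^{\beta-1}|u'|^{2} - \ell^{\beta}$ (up to the normalisation adopted here) and the non-autonomous Hamilton identity $\frac{d}{dt}(e^{\alpha t}H) = -\partial_{t}(e^{\alpha t}\ell^{\beta})$, one peels off the weight $e^{\alpha t}$ to obtain
\[
\frac{d}{dt}H(u) = -2\alpha\beta\,\ell(u)^{\beta-1}|u'|^{2}\,.
\]
Since $\alpha,\beta>0$ and $\ell>0$, this shows $H$ is non-increasing along every solution and constant precisely when $u'\equiv 0$.

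For Part~(1), the direction $(b)\Rightarrow(a)$ is a direct substitution: for a constant $u=(\theta_{0},0)\in T^{1}\times\{0\}$, the infinite flatness of $V(\theta,y)=-e^{-\nicefrac{1}{y^{2}}}\sin(\theta+\nicefrac{1}{y})$ at $y=0$ yields $V(u)=0$ and $\nabla V(u)=0$, so the Euler-Lagrange equation holds and $H$ evaluates to the asserted constant $-B_{0}^{\beta}$. Conversely, if $H$ is constant then the identity above forces $u'\equiv 0$, so $u$ is a constant, and the Euler-Lagrange equation reduces to $\nabla V(u)=0$. An explicit inspection of $V$: for $y\neq 0$, $\partial_{\theta}V=0$ forces $\cos(\theta+\nicefrac{1}{y})=0$, whence $\sin(\theta+\nicefrac{1}{y})=\pm 1$ and $\partial_{y}V\ne 0$; hence $\nabla V=0$ only on $T^{1}\times\{0\}$.

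For Part~(2), assume $H(u(0))<-B_{0}^{\beta}$; by monotonicity $H(u(t))<-B_{0}^{\beta}$ for all $t\ge 0$. Claim~(a) proceeds by contradiction: if $u$ stayed bounded on $[0,\infty)$, integrating the decay formula would give $\int_{0}^{\infty}\ell^{\beta-1}|u'|^{2}\,dt<\infty$, and a LaSalle-type argument (upgraded to pointwise smallness along a subsequence via uniform $C^{1}$ bounds from the Euler-Lagrange equation and equicontinuity) extracts $t_{k}\to\infty$ with $u(t_{k})\to u_{*}$ constant and $u'(t_{k})\to 0$. Then $u_{*}$ is a critical point of $V$, so by Part~(1) $u_{*}\in T^{1}\times\{0\}$ and $H(u(t_{k}))\to-B_{0}^{\beta}$, contradicting the strict inequality. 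For Claim~(b), a direct phase-space computation shows that on the slice $\{V=0\}$ (where $B=B_{0}$) the Hamiltonian satisfies $H\ge-B_{0}^{\beta}$, which rules out $u(t)\in\{V=0\}$ for $t>0$.

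The principal obstacle is the LaSalle step in~(a): the system is non-autonomous through the explicit weight $e^{\alpha t}$, so upgrading the integrated $L^{2}$-decay of $|u'|^{2}$ to pointwise smallness requires uniform $C^{1}$ control on bounded orbits, furnished by the Euler-Lagrange equation. The explicit characterisation $\{\nabla V=0\}=T^{1}\times\{0\}$ established in Part~(1) then pins down the subsequential limit.
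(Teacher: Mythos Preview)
Your proposal is essentially correct and follows the same architecture as the paper: the derivative identity $\frac{d}{dt}H(u)=-2\alpha\beta\,\ell(u)^{\beta-1}|u'|^{2}$, the characterisation $\{\nabla V=0\}=T^{1}\times\{0\}$ for Part~(1), and the contradiction via an $\omega$-limit argument for Part~(2)(a), together with property \ref{property.H2} for Part~(2)(b).

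Two remarks. First, your concern that ``the system is non-autonomous through the explicit weight $e^{\alpha t}$'' is misplaced: after dividing through by the weight, the second-order form \eqref{eq.Euler-Lagrange} reads
\[
\left(I+2(\beta-1)\tfrac{u'\otimes u'}{\ell(u)}\right)u''+\alpha u'=-\tfrac12\,\tfrac{\partial V}{\partial u},
\]
which \emph{is} autonomous; the weight has become the friction term $\alpha u'$. This is precisely what makes the LaSalle/translation argument work without further effort. Second, the sentence ``Then $u_{*}$ is a critical point of $V$'' is the one step you should not leave as a bare assertion: from $u(t_k)\to u_*$ and $u'(t_k)\to 0$ alone it does not follow. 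The paper fills this by passing to the limit of the translated solutions $u(t_k+\cdot)$ (licit because the equation is autonomous and $(u,u',u'')$ are uniformly bounded by \ref{property.H1} and \eqref{eq.Euler-Lagrange}); the limit $v$ is again a solution with $H(v(\cdot))$ constant, so Part~(1) forces $v\equiv u_*\in T^{1}\times\{0\}$. This is exactly the invariance step behind ``LaSalle'', and once you note the autonomy it is one line rather than the $C^{1}$ upgrade you sketched.
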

	One should compare this lemma to \cite[Lemma, page 127]{White}
	\item[\emph{Step 6:}] 
	\begin{lemma}\label{lem.non-unique tangent}
		There exists a global solution $u$ to \eqref{eq.Euler-Lagrange} with 
		\begin{enumerate}
			\item\label{lem.non-unique tangent conclusion1} $H(u(t))< - B_0^\beta$ for all $t$ and $\lim_{t \downarrow -\infty} H(u(t)) = - B_0^\beta$;
			\item\label{lem.non-unique tangent conclusion2} $E(u)\lesssim B_0^\beta $
			\item\label{lem.non-unique tangent conclusion3} $T^1\times \{0\}= \{ \lim_{k \to \infty} v(t_k) \colon t_k \downarrow -\infty \}$. 
		\end{enumerate}
	\end{lemma}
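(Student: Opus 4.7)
The plan is a shooting-and-limit argument modeled on B.~White's lemma. The first key observation is that $H$ is monotone non-increasing along trajectories of the Euler-Lagrange equation: differentiating the formula for $H$ and using the equation together with the time factor $e^{\alpha t}$ yields
\[
\frac{d}{dt}H(u(t))=-2\alpha\beta\,\ell(u(t))^{\beta-1}|u'(t)|^2\le 0.
\]
Together with Lemma~\ref{lem.H constant}, this means that the rest manifold $\mathcal{M}:=T^1\times\{0\}$, on which $H$ takes the critical value $H_\ast=-B_0^\beta$, acts as a barrier: any solution with $H(u(R))<H_\ast$ satisfies $H(u(t))\le H(u(R))<H_\ast$ for $t\ge R$, while the backward limit $H_{-\infty}:=\lim_{t\to-\infty}H(u(t))\le H_\ast$ exists.

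For each small $\varepsilon>0$, I would construct an approximating solution $u_\varepsilon$ by solving the Cauchy problem at $t=R$ with data chosen so that $H(u_\varepsilon(R))=H_\ast-\varepsilon$ and the trajectory is transverse to $\mathcal{M}$. Using $H$-monotonicity, the finite-energy criterion of Step~2, and Lemma~\ref{lem.H constant}(2), one shows that $u_\varepsilon$ extends to $(-\infty,R]$ with uniform $C^1$ bounds. A subsequential limit yields a solution $u$ with $H(u(t))\le H_\ast$ for every $t$ and $H_{-\infty}=H_\ast$. The strict inequality $H(u(t))<H_\ast$ at each finite $t$ follows from Lemma~\ref{lem.H constant}(1): if equality held at some $t_0$, monotonicity would force $H\equiv H_\ast$ on $(-\infty,t_0]$, hence $u$ constant in $\mathcal{M}$ and in particular $H(u(R))=H_\ast$, contradicting the construction. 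This establishes (\ref{lem.non-unique tangent conclusion1}); the energy estimate (\ref{lem.non-unique tangent conclusion2}) then follows from $|H|\lesssim B_0^\beta$ together with the integrability of $e^{\alpha t}$ on $(-\infty,R]$.

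For (\ref{lem.non-unique tangent conclusion3}) I would exploit the rapid oscillation of $V(\theta,y)=-e^{-1/y^2}\sin(\theta+1/y)$. Applied at arbitrarily negative times (time translation preserves the Euler-Lagrange equation up to a constant factor in the Lagrangian), Lemma~\ref{lem.H constant}(2) gives $u(t)\notin\{V=0\}$ for every $t$. In the universal cover $\R\times(0,\infty)$ of $T^1\times(0,\infty)$ the set $\{V=0\}$ is the disjoint union of the graphs $\theta=k\pi-1/y$, $k\in\Z$, so the connected curve $(\theta(t),y(t))$ lies in a single strip $\{k\pi<\theta+1/y<(k+1)\pi\}$. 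The condition $H_{-\infty}=H_\ast$ together with the formula for $H$ forces $(u(t),u'(t))\to(\mathcal{M},0)$ as $t\to-\infty$; in particular $y(t)\searrow 0$ and $1/y(t)\to\infty$. The strip constraint then forces $\theta(t)\to-\infty$ in the universal cover, so $\theta(t)\bmod 2\pi$ winds around $T^1$ infinitely often, which, combined with continuity and $y(t)\to 0$, produces the set equality in (\ref{lem.non-unique tangent conclusion3}).

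The main obstacle is the selection of the limit trajectory in the shooting step: ruling out that $u_\varepsilon$ converges to a trivial (constant) limit or that $H_{-\infty}$ drifts strictly below $H_\ast$. This can be handled either by a stable-manifold analysis at $\mathcal{M}$ (the linearisation carries a zero eigenvalue along $\mathcal{M}$, with transverse exponents governed by $\alpha$) or, more in the spirit of the outline, by a variational reformulation: minimise $E(u)$ over trajectories asymptotic to $\mathcal{M}$ at $t=-\infty$ with fixed data at $t=R$, then invoke Steps~3--4 of the outline to conclude that the minimiser is a uniquely minimising critical point with the desired asymptotic behaviour.
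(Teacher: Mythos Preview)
Your overall architecture---a shooting/limit construction for (\ref{lem.non-unique tangent conclusion1})--(\ref{lem.non-unique tangent conclusion2}) followed by the strip argument in the universal cover for (\ref{lem.non-unique tangent conclusion3})---matches the paper, and your treatment of (\ref{lem.non-unique tangent conclusion3}) is essentially identical to the paper's.

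The gap is in the shooting step, and it is not adequately closed by the two remedies you sketch at the end. If you prescribe data at $t=R$ with $H=H_\ast-\varepsilon$ and flow \emph{backward}, then $H$ (being non-increasing forward) \emph{increases} along $(-\infty,R]$; nothing you cite prevents $H$ from becoming positive, after which property~\ref{property.H1} no longer yields a bound on $|u'|$. The finite-energy criterion of Step~2 is an equivalence, not an a~priori estimate, and Lemma~\ref{lem.H constant}(2) concerns forward-time behaviour, so neither supplies the uniform $C^1$ bounds you claim on $(-\infty,R]$. There is a second, related problem: even granting compactness, nothing in your construction anchors the limit trajectory away from $\mathcal{M}$, so the subsequential limit could be a constant in $\mathcal{M}$; your proposed contradiction ``$H(u(R))=H_\ast$'' is no contradiction once you have already sent $\varepsilon\to 0$.

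The paper's fix is more elementary than a stable-manifold or variational argument and resolves both issues at once. One shoots \emph{forward} from $u_n(0)=(\tfrac{\pi}{2},\tfrac{1}{2n\pi})$, $u_n'(0)=0$; this lies in $\{V<0\}$, so $H(u_n(0))\in(H_\ast-O(\tfrac1n),\,H_\ast)$, and monotonicity gives $H(u_n(t))<H_\ast<0$ for all $t\ge 0$, hence $|u_n'|\le C$ by \ref{property.H1}. Lemma~\ref{lem.H constant}(2) forces $u_n$ to leave every bounded set forward in time, so there is a \emph{first} time $t_n$ with $u_n(t_n)\in T^1\times\{1\}$; by continuous dependence on initial data $t_n\to\infty$, and by construction $u_n([0,t_n])\subset T^1\times(0,1]$. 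Now translate: $\tilde u_n(t):=u_n(t+t_n)$ has $\tilde u_n(0)\in T^1\times\{1\}$ and uniform $C^1$ bounds on $[-t_n,0]$, so a subsequential limit $v$ exists with $v(0)\in T^1\times\{1\}$. This anchor point is the whole trick: it immediately rules out the constant limit and gives the strict inequality in (\ref{lem.non-unique tangent conclusion1}), since $H(v(t_0))=H_\ast$ for some $t_0$ would force $v\equiv\text{const}\in\mathcal{M}$ on $(-\infty,t_0]$ by Lemma~\ref{lem.H constant}(1), contradicting $v(0)\in T^1\times\{1\}$. The backward limit $H(v(-\infty))=H_\ast$ then follows from compactness of $v((-\infty,0])\subset T^1\times[0,1]$ together with Lemma~\ref{lem.H constant}(1) applied to any backward $\omega$-limit.
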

\end{itemize}

\subsubsection*{to Step 0:}\label{subsubsec.toStep0-equivariant}
Recall the structure of equivariant maps i.e. \eqref{eq.equivariant maps}. Hence we may choose coordinates $u(e^t)=v(t)$ i.e. $u(|x|)=v(\ln|x|)$ and $u'(|x|)= \frac{v'(|x|)}{|x|}$ hence $(|Du|^2 + B(u)|D\frac{x}{|x|}|^2) = |x|^{-2} \left( |v'|+ (m-1) B(v) \right)$, where $v,v'$ are evaluated at $ln|x|$. Using polar coordinates $x=e^{t}y$ the energy becomes
\[ E(U)= \int_{B_1} \bigl(|Du|^2+ B(u) |D\frac{x}{|x|}|^2\bigr)^{\nicefrac p2} = |\partial B_1| \int_{-\infty}^0 (|v'|^2 + (m-1)B(v))^{\nicefrac p2} \, e^{(m-p)t}dt\,, \] 
i.e. the $p$-harmonic map energy for equivariant maps agrees with \eqref{eq.new energy} for parameters $\alpha=m-p$ and $\beta = \nicefrac{p}{2}$.

\subsubsection*{to Step 1:}\label{subsubsec.toStep1-equivariant} Since \eqref{eq.new energy} is just the action functional to the Lagrangian $\ell(u)^\beta e^{\alpha t}$ the associated Euler-Lagrange equation is just 
\begin{align}\label{eq.Euler-Lagrange}\nonumber
	&\frac{d}{dt} \left(e^{\alpha t}\,\ell(u)^{\beta -1} \frac{\partial \ell}{\partial u'}\right) =  \left(e^{\alpha t} \,\ell(u)^{\beta -1} \frac{\partial \ell}{\partial u}\right)\,\\
	\text{ or }\quad 2&\frac{d}{dt} \left( e^{\alpha t} \,\ell(u)^{\beta -1}  u'\right) =  - \left(e^{\alpha t} \ell(u)^{\beta -1} \frac{\partial V}{\partial u} \right)\\\nonumber
	\text{ or }\quad \;\;&\left(I+ 2(\beta-1) \frac{u'\otimes u'}{\ell(u)}\right) u'' +\alpha u' = - \frac12 \frac{\partial V}{\partial u}.
\end{align}
Since  $\min\{1,2\beta -1\} I \le \left(I+ 2(\beta-1) \frac{u'\otimes u'}{\ell(u)}\right) \le \max\{1,2\beta-1\} I$ the matrix is for $\beta >\frac12$ uniformly invertible and since $\sup|DV|<\infty$ solutions to  \eqref{eq.Euler-Lagrange} exists for all times. \medskip

The interpretation as a mechanical system immediately suggest to consider the associated energy i.e. the first integral associated to a solution $u$ of \eqref{eq.Euler-Lagrange} 
\begin{align*}
	-\alpha \ell(u)^\beta e^{\alpha t}=- \frac{\partial (\ell(u)^\beta e^{\alpha t})}{\partial t}= \frac{d}{dt} \left(\left(\beta \ell(u)^{\beta -1} \frac{\partial \ell}{\partial u'}\cdot u'- \ell(u)^\beta\right)e^{\alpha t} \right)\\=e^{\alpha t} \frac{d}{dt} \left(\beta \ell(u)^{\beta -1} \frac{\partial \ell}{\partial u'}\cdot u'- \ell(u)^\beta\right) + \alpha e^{\alpha t} \left(\beta \ell(u)^{\beta -1} \frac{\partial \ell}{\partial u'}\cdot u'- \ell(u)^\beta\right)
\end{align*}
Hence we deduce that for the introduced Hamiltonian \eqref{eq.hamiltonian} along a particle $u$
\begin{align}\label{eq.derivative of H}
	\frac{d}{dt} H(u) = \frac{d}{dt} \left(\beta \ell(u)^{\beta -1} \frac{\partial \ell}{\partial u'}\cdot u'- \ell(u)^\beta\right) &= -2\beta \alpha \ell(u)^{\beta -1} |u'|^2\\\label{eq.derivative of H-v2}
	&= -\alpha\frac{2\beta |u'|^2}{(2\beta -1)|u'|^2 -B(u)} H(u)\le 0
\end{align}
\medskip
We will need the following two observations on the algebraic properties of $H$:
\begin{enumerate}[label=(H.\arabic*),ref=(H.\arabic*)]
	\item\label{property.H1} $H(u)\le 0$ implies $|u'|^2 \le C(B_0 + \sup |V|)$
	 and so $H(u)\ge -C (B_0 + \sup|V|)^\beta$
	\item\label{property.H2} $\frac{\partial H(u)}{\partial |u'|^2} \ge 0$ and so $\{V=0\} \subset \{ H \ge - B_0^{\beta} \}$
\end{enumerate} 
For the first we note that $H(u)<0$ implies that $(2\beta -1) |u'|^2 \le B(u)$. 
For the second a direct calculation reveals 
\[ \frac{\partial H}{\partial |u'|^2} = \ell(u)^{\beta -2} \left( (2\beta -1) \beta |u'|^2 + \beta B(u)\right) \ge 0 \,. \]
Hence $H(u,u') \ge H(u,0) = - B(u)^\beta$ and if $V(u)=0$ we conclude $H(u)\ge - B_0^\beta$. 

\subsubsection*{to Step 2:}\label{subsubsec.toStep2-equivariant}
``$\Leftarrow$'' follows from \ref{property.H1} since $H(u)<0$ implies $|u'|^2 \le C(B_0 +1)$ and so $\ell(u)^\beta \le C(B_0 +1)^\beta$ and therefore $E(u)\le \frac{C}{\alpha} (B_0+1)^\beta$. 

``$\Rightarrow$'' Suppose there is $t_0<0$ with $H(u(t_0))>0$, and so the monotonicity of $H$ gives $H(u(t))>0$ for $t<t_0$. Note \eqref{eq.derivative of H-v2} implies, in the case of $H(u)\ge 0$ and using the fact that $B(u)>0$, that  \[\frac{d}{dt} H \le - \alpha \frac{2\beta}{2\beta -1}H(u)\,.\]
Thus for any $t<t_0$ with $c=\alpha\frac{2\beta}{2\beta -1}$ the monotonicity of $H$ can be strengthened to
\[e^{ct_0}H(t_0) \le e^{ct} H(t)\,.\]
Hence there is $t_1<t_0$ with $H(u(t_1))\ge CB_0^\beta$. This implies firstly, that $H(u(t))\ge H(u(t_1))\ge C B_0$, secondly that $|u'|^2\ge 2(B_0+\sup|V|)$. Since $H(u) \le \max\{2\beta-1,\} \ell(u)^\beta$ we found that using the fact that  $c>\alpha$
\[ \int_{\tau}^{t_1} \ell(u)^\beta e^{\alpha t} \, dt \gtrsim \int_{\tau}^{t_1} H(u)  e^{\alpha t} \, dt \gtrsim C B_0^\beta \int_\tau^{t_1} e^{\alpha t - ct}\,dt \to +\infty \text{ as } \tau \downarrow -\infty\,.\]
This contradicts the finite energy assumption. 
\subsubsection*{to Step 3:}\label{subsubsec.toStep3-equivariant}
Let us introduce the energy restricted to the interval $(a,b)\subset \R$
\[ E_{(a,b)}(u)= \int_{a}^b \ell(u)^\beta\, e^{\alpha t} \,dt\,.\]
Due to step 2 we must have $H(u)\le 0$. Together with property \ref{property.H1}, it implies that \begin{equation}\label{eq.sup bound} |u(t)| \lesssim (B_0^{\nicefrac12} |t| +1)\,.\end{equation}
Given a sequence $t_k \downarrow -\infty$, we denote with $v_k$ a minimizer of 
\[\min\left\{ E_{(t_k,0)}(v) \colon v(0)=u(0) \right\}\,.\]
Then $v_k$ is characterised by solving \eqref{eq.Euler-Lagrange} with boundary conditions $v_k(0)=u(0)$ and $v_k'(t_k)=0$. First, ODE theory implies that $v_k$ is the unique minimizer. Second, the latter condition implies that $H(v_k(t_k)) < 0$. Appealing to the monotonicity of $H$ along $v$ we have $H(v_k(t)) \le 0$ for all $t_k \le t\le 0$ and so by \ref{property.H1} $v_k$ satisfies \eqref{eq.sup bound} as well. Let $w_k$ be the linear interpolation between $u(t_k)$ and $v_k(t_k+1)$ on $[t_k, t_k+1]$ given by \[w_k(t)=(t_k+1-t) u(t_k) + (t+t_k) v_k(t_k+1)\,.\]
It satisfies $|w_k'(t)| =|u(t_k)-v_k(t_k+1)|\lesssim B_0^{\nicefrac12}|t_k| +1$.
We construct a competitor for $u$ on $[0,t_k]$ using $w_k$ by
\[ \tilde{v}(t)=\begin{cases}
	v_k(t) &\text{ for } t_k+1 \le t \le 0\\
	w_k(t) &\text{ for } t_k \le t \le t_k\,.
\end{cases}\]
Since $u$ is the unique minimizer to $\min\{E_{(t_k,0)}(v)\colon v=u \text{ on } \partial [t_k,0]\}$ by ODE theory, we have 
\begin{align*}
	E_{(t_k,0)}(u) \le E_{(t_k,0)}(\tilde{v})&\le E_{(t_k+1,0)}(v_k)+ C \int_{t_k}^{t_k+1} \left(B_0|t_k|^2 +1\right)^\beta e^{\alpha t}\,dt\\
	&\le E_{(t_k,0)}(v_k)+ C \ \left(B_0|t_k|^2 +1\right)^\beta e^{\alpha t_k}\,.
\end{align*}
Hence we found 
\[ E_{(t_k,0)}(v_k)\le E_{(t_k,0)}(u)\le E_{(t_k,0)}(v_k) + C|t_k|^{2\beta}e^{\alpha t_k}\,.\]
Since $\lim_{k \to \infty} E_{(t_k,0)}(v_k)=\min\{ E(v)\colon v(0)=u(0)\}$ we can deduce, that $u$ is a minimizer. 

\subsubsection*{to Step 4:}\label{subsubsec.toStep4-equivariant}
Since solutions to \eqref{eq.Euler-Lagrange} exists for all time $u$ is defined on $\R$. Hence we can apply step 3 to $t \mapsto u(t+\delta)$ for any $\delta>0$ to deduce that $u$ is minimizing as well on $(-\infty, \delta)$. Now let $\tilde{u}$ be a minimizer of $\min\{ E_{(-\infty,0)}(v)\colon v(0)=u(0)\}$ then we obtain a competitor $\hat{u}$ to $u$ on $(-\infty,\delta)$ by extending it by $u$ i.e. 
\[ \hat{u}(t)=\begin{cases}
	\tilde{u}(t) &\text{ for }  t \le 0\\
	u(t) &\text{ for } 0 \le t \le \delta \,.
\end{cases}\]
Since $E_{(-\infty,0)}(\tilde{u})=E_{(-\infty,0)}(u)$, $\hat{u}$ is a minimizer for the interval $(-\infty,\delta)$ since $E_{(-\infty,\delta)}(\hat{u})=E_{(-\infty,\delta)}(u)$. But this implies that $\hat{u}$ satisfies \eqref{eq.Euler-Lagrange} and so due to uniqueness of ODE's we must have $\tilde{u}=u$.
\subsubsection*{to Step 5:}\label{subsubsec.toStep5-equivariant}
\begin{proof}[Proof of Lemma \ref{lem.H constant}]
If $H$ is constant then $u'=0$ due to \eqref{eq.derivative of H}. So \eqref{eq.Euler-Lagrange} implies that the LHS is vanishing and we must have $\frac{\partial V}{\partial u}=0$, but since $\{ \frac{\partial V}{\partial u}=0 \}= T^1 \times \{0\}$ we conclude that \eqref{eq.H constant: equivalence a} implies \eqref{eq.H constant: equivalence b}. The other direction is trivial. 
Now suppose that $H(u(0))<-B_0^\beta$. As we observed in Step 1 the structure of the ODE implies that solutions exists for all times. Now suppose that $u(t)$ remains in a bounded region of $T^1 \times \R$. Hence for any $t_i \to \infty$ the sequence $(u(t_i),u'(t_i))$ is bounded due to the algebraic property \ref{property.H1} of $H$. Thus there is a subsequence, not relabelled, that convergences. 
The smooth dependents on the initial conditions of solutions to ODE implies that the solutions $u_i(t)=u(t_i+t)$ converges locally smoothly to a solutions $v(t)$ of \eqref{eq.Euler-Lagrange}. 
But the monotonicity of $H$ along a solutions implies
\begin{equation}\label{eq.H < - B_0^beta}
	H(v(t))=\lim_{i \to \infty} H(u(t_i + t)) = \lim_{t\to \infty} H(u(t)) < - B_0^\beta\,.
\end{equation}
Hence $H(v(t))$ is constant, so that by the first part we have $v(t)\in T^1\times \{0\} \cap \{ H=-B_0^\beta \}$ is constant but this contradicts \eqref{eq.H < - B_0^beta}.
\end{proof}

\subsubsection*{to Step 6:}\label{subsubsec.toStep6-equivariant}
\begin{proof}[Proof of Lemma \ref{lem.non-unique tangent}]
	We consider initial data $u_n(0)=(\frac{\pi}{2},\frac{1}{2n\pi})$ with $u_n'(0)=0$. Since $V(u_n(0))<0$ one has
	\[-B_0^\beta - O\left(\frac1n\right)< H(u_n(0)) < -B_0^\beta\,,\]
	and so by the previous Lemma \ref{lem.H constant} the solution exists for all $t>0$ and must become unbounded, \eqref{eq.unbounded} . In fact $u_n(t)\in T^1\times (0,\infty)$ since by \eqref{eq.no intersection} the solution never crosses $T^1 \times \{0\}$. Following the original argument, let $t_n>0$ be the first time at which $u_n(t_n)\in T^1\times\{1\}$. Since $(u_n(0),u'_n(0))\to \bigl((\frac{\pi}{2},0),0\bigr)$ and the smooth dependents on the initial data for solutions of ODE's implies that $u_n(t)\to v_{const}(t)\equiv (\frac{\pi}{2},0)$ locally uniformly. Thus we must have $\liminf_n t_n= \infty$. Since $\tilde{u}_n(t)=u_n(t_n+t)$ satisfies $\tilde{u}_n(t)\in T^1\times (0,1]$ and $|\tilde{u}'_n(t)|^2\le C$ for all $[-t_n,0]$ there is a subsequence not relabelled such that $\tilde{u}_n(0)\to (\theta,1)$ for some $\theta \in T^1$ and $\lim_{n \to \infty}\tilde{u}'_n(0)$ exists. So that for that subsequence there is a solution $v(t)$ of \eqref{eq.Euler-Lagrange} with $\lim_{n\to \infty} \tilde{u}_n(t)=v(t)$ locally uniformly. 
	Since $H(\tilde{u}_n(t))< - B_0^\beta$ for all $t \ge -t_n$ we have that $H(v(t))\le -B_0^\beta$ for all $t\in \R$. 
	\medskip
	
	In fact the claimed conclusion \eqref{lem.non-unique tangent conclusion1} must hold.  Assume by contradiction that $H(v_0(t_0))=-B_0^\beta$. But then the monotonicity of $H$ along $v$ implies that $H(v(t))=-B_0^\beta$ for all $t\le t_0$ and so $v(t)\equiv (\theta, 0)$ by Lemma \ref{lem.H constant}, \eqref{eq.H constant: equivalence a}. Thus contradicting $v(0) \in T^1\times \{1\}$.
	To show the second part we argue analogously as in the proof of Lemma \ref{lem.H constant} part (2): Due to property \ref{property.H1} and $v(t) \in T^1 \times [0,1]$ for $t<0$ any sequence $t_n \downarrow -\infty$ contains a subsequence, not relabelled, such that $v(t_n)$ and $v'(t_n)$ converge. Hence $\lim_{n \to \infty} v(t_n + t)= w(t)$ locally uniformly. Additionally we have 
	\[H(w(t)) = \lim_{n \to \infty} H(v(t_n+ t)) = \lim_{t \to -\infty} H(v(t)) \le - B_0^\beta, \]
	 where we have used once again the monotonicity of $H(v(t))$. 
	 So that $H(w(t))=-B_0^\beta$ and $w(t)\equiv \lim_{n \to \infty} v(t_n) \in T^1 \times \{0\}$ constant. 
	 
	 Conclusion \eqref{lem.non-unique tangent conclusion2} follows from $H(v(t))< - B_0^\beta$, because this implies $|v'|^2\lesssim C B_0$ and so $\ell(v)^\beta \lesssim B_0^\beta$. Thus one found $E(u) \lesssim e^{\alpha R} \; B_0^\beta$.
	 
	 It remains to show that each given $(\theta_0, 0) \in T^1\times \{0\}$ is a possible limit. Since $v(t)$ is a continuous map into $T^1\times \R$ we can consider the lifted path $(\theta(t),y(t))\in \R\times \R$, where we implicitly used the identification $T^1=\nicefrac{ \R}{2\pi \Z}$. For a any fixed $t_0$ there is $k \in \Z$ such that  
	 \begin{equation}\label{eq.strip}
	 	k \pi< \theta(t_0) + \nicefrac{1}{y(t_0)} < (k+1) \pi\,.
	 \end{equation}
	 As observed in Lemma \ref{lem.H constant} conclusion \eqref{eq.no intersection} the solution $v(t)$ never enters 
	 \[\{ V=0\}=T^1\times \{0\} \cup \{ x+ \nicefrac1y = \pi \Z \}\,.\]
	 Hence the constraint \eqref{eq.strip} holds for all $t\in \R$. 
	 As we have just confirmed $y(t)\downarrow 0$ as $t \to -\infty$  hence $\theta(t)\downarrow - \infty$. Therefore the continuous curve $\theta(t)$ must cross $\theta_0 + 2\pi \Z$ infinitely many times. Thus there is a desired sequence $t_n \to - \infty$ with $\theta(t_n)-\theta_0 \in 2\pi \Z$ proving the final conclusion \eqref{lem.non-unique tangent conclusion3}.
\end{proof}

\bibliographystyle{plain}
\bibliography{lit}

\begin{thebibliography}{1}

\bibitem{CaniatoParise2025}
Riccardo {Caniato} and Davide {Parise}.
\newblock {Symmetric log-epiperimetric inequality for harmonic maps with analytic target and applications}.
\newblock {\em arXiv e-prints}, page arXiv:2505.08152, May 2025.

\bibitem{Fuchs}
Martin Fuchs.
\newblock {$p$}-harmonic obstacle problems. {I}. {P}artial regularity theory.
\newblock {\em Ann. Mat. Pura Appl. (4)}, 156:127--158, 1990.

\bibitem{GulliverWhite1989}
Robert Gulliver and Brian White.
\newblock The rate of convergence of a harmonic map at a singular point.
\newblock {\em Math. Ann.}, 283(4):539--549, 1989.

\bibitem{HardtLin}
Robert Hardt and Fang-Hua Lin.
\newblock Mappings minimizing the {$L\sp p$} norm of the gradient.
\newblock {\em Comm. Pure Appl. Math.}, 40(5):555--588, 1987.

\bibitem{Hong2000}
Min-Chun Hong.
\newblock On the {J}\"ager-{K}aul theorem concerning harmonic maps.
\newblock {\em Ann. Inst. H. Poincar\'e{} C Anal. Non Lin\'eaire}, 17(1):35--46, 2000.

\bibitem{Luckhaus}
Stephan Luckhaus.
\newblock Partial {H}\"older continuity for minima of certain energies among maps into a {R}iemannian manifold.
\newblock {\em Indiana Univ. Math. J.}, 37(2):349--367, 1988.

\bibitem{SchoenUhlenbeck}
Richard Schoen and Karen Uhlenbeck.
\newblock A regularity theory for harmonic maps.
\newblock {\em J. Differential Geom.}, 17(2):307--335, 1982.

\bibitem{Simon1983}
Leon Simon.
\newblock Asymptotics for a class of non-linear evolution equations, with applications to geometric problems.
\newblock {\em Ann. Math. (2)}, 118:525--571, 1983.

\bibitem{White}
Brian White.
\newblock Nonunique tangent maps at isolated singularities of harmonic maps.
\newblock {\em Bull. Amer. Math. Soc. (N.S.)}, 26(1):125--129, 1992.

\end{thebibliography}
\end{document}